\newtheorem{proposition}{Proposition}
\newtheorem {lemma}{Lemma}
\newtheorem {theorem}{Theorem}
\newtheorem {cor}{Corollary}
\theoremstyle{remark}
\newtheorem{remark}[theorem]{Remark}
\numberwithin{equation}{section}
\numberwithin{equation}{section}
\numberwithin{equation}{section}
\newcommand{\Pbb}[1]{\Pb\lb #1\rb}
\newcommand{\Ebb}[1]{\Eb\lbb #1\rbb}
\newcommand{\LL}{L\'{e}vy }
\newcommand{\LLPs}{L\'{e}vy processes }
\newcommand{\tto}[1]{_{#1\to 0}}
\newcommand{\ttinf}[1]{_{#1\to \infty}}
\newcommand{\Eb}{\mathbb{E}}
\newcommand{\Pb}{\mathbb{P}}
\newcommand{\lb}{\left (}
\newcommand{\rb}{\right )}
\newcommand{\lbb}{\left [}
\newcommand{\rbb}{\right ]}
\newcommand{\labs}{\left |}
\newcommand{\rabs}{\right |}
\newcommand{\lbrb}[1]{\lb #1 \rb}
\newcommand{\lbbrbb}[1]{\lbb#1\rbb}
\newcommand{\labsrabs}[1]{\labs#1\rabs}
\begin{document}
\title{On the Range of Subordinators}

\author{Mladen Savov}
   
\address{Department of Mathematics, University of Reading, Whiteknights campus, Reading RG6 6AX, United Kingdom}
\maketitle
\begin{abstract}
In this note we look into detail at the box-counting dimension of subordinators. Given that $X$ is a non-decreasing \LL process, which is not a compound Poisson process, we show that in the limit, the minimum number of boxes of size $\delta$ that cover the range of $\lbrb{X_s}_{s\leq t}$ is a.s.\,\,of order $t/U(\delta)$, where $U$ is the potential function of $X$. This is a more refined result than the lower and upper index of the box-counting dimension computed in \cite[Ch. 5, Th. 5.1]{B99} which deals with the asymptotic number of boxes at logarithmic scale.
\end{abstract}

\section{Introduction and Results}

	In this note we consider the minimal number of intervals that cover the range of a given subordinator $X:=(X_s)_{s\geq 0}$. The problem of studying the set properties of the range of \LLPs in general and subordinators in particular have a long history. Various measures for dimension have been discussed for the range of \LL processes. We refer to \cite{B99} and \cite{B96} for more information on the range of subordinators and to the work of \cite{KX05}, \cite{KX06}, for results on more general \LL processes. In this work we improve the results on the box-counting dimension of subordinators presented in \cite[Ch.5]{B99} by showing the a.s.\,\,convergence of the random variable that counts the minimal number of intervals that cover the range of a subordinator up to a given time $t$ rescaled by the potential function of the subordinator $X$. Previously the behaviour of the number of intervals has only been discussed at a logarithmic scale ( see Remark \ref{rem:R1} ), and even then a precise convergence has not been available. \newline
	
	Recall that for any subordinator $X$ defined on some probability space $\lbrb{\Omega,\mathcal{F},\Pb}$, we have that
	\begin{equation}\label{eq:Laplace}
	\Ebb{e^{-\lambda X_t}}=e^{-t\Phi(\lambda)},\,\,\,\,\text{ for all $\lambda>0$},
	\end{equation}
	where we call $\Phi\colon[0,\infty)\mapsto[0,\infty)$ the \LL\!\!-Khintchine exponent of the subordinator $X$. The function $\Phi$ has the representation
	\begin{equation}\label{eq:Phi}
	\Phi\lbrb{\lambda}=d\lambda+\int_{0}^{\infty}\lbrb{1-e^{-\lambda y}}\Pi(dy),
	\end{equation}
	where $d\geq 0$ is the linear drift of the subordinator $X$ and the measure $\Pi$, satisfying $\int_{0}^{\infty}\min\{1,x\}\Pi(dx)<\infty$, describes the intensity and the size of the jumps of $X$. In the sequel we shall assume either that the jumps of $X$ are \textbf{infinitely} many on any finite interval of time (\,that  is, $\Pi\lbrb{0,\infty}=\infty$\,) or, if the jumps of $X$ are finitely many on any finite interval of time  (\,that  is, $\Pi\lbrb{0,\infty}<\infty$\,),  that $d>0$. This is
	equivalent to $X$  not being a compound Poisson process.\newline 
	 
	Denote by $N(t,\delta)$ the minimal number of intervals of length at most $\delta$ that are needed to cover the range of $X$ up to time $t>0$. The most economic covering of this type is constructed in the following way: Denote by $T_{0}=0$ and 
	\begin{align*}
		T_{1}(\delta)&=\inf\{t\geq 0:\,\,X_{t}>\delta\},\\
		T_{n+1}(\delta)&=\inf\{t\geq T_{n}(\delta):\,\,X_{t}-X_{T_{n}(\delta)}>\delta\},
	\end{align*}
	which clearly implies that $\{N(t,\delta)\geq k\}=\{T_{k}(\delta)\leq t\}$. For clarity we write $T_k(\delta)=T_k$ when there is no ambiguity. The minimal covering of the range of $X$ up to any time $t>0$ is the collection of random intervals $\{\lbrb{X_{T_{n-1}},X_{T_{n-}}}\}_{\{n\geq 0,T_{n-1}<t\}}$. In the following, we write $\eta_{i}:=T_i-T_{i-1}$ and note that $(\eta_{i})_{i\geq 1}$ is a sequence of independent identically distributed random variables.
	We will frequently use the $q$-potentials of $X$ defined by
	\begin{align}\label{eq:Q-Potentials}
		&U_{q}(\delta)=\int_{0}^{\infty}e^{-qt}\Pbb{X_{t}\leq \delta}dt=\frac{1}{q}-\frac{1}{q}\Ebb{e^{-qT_1(\delta)}},\,\,\,\,\text{ for all $q>0$}
	\end{align}
	and we abbreviate $U(\delta)\colon\!\!=U_{0}(\delta)=\Ebb{T_1(\delta)}$ noting that the first identity in \eqref{eq:Q-Potentials} makes sense even when $q=0$. Our aim is to show that $U(\delta)N(t,\delta)$ converges to $t$ almost surely. A case where this would fail is the compound Poisson process case in which it is apparent that $U(0+)>0$.

	 \begin{theorem}\label{t1}
	 	If $X$ is a subordinator with $\Pi\lbrb{0,\infty}=\infty$ or $d>0$, if $\Pi\lbrb{0,\infty}<\infty$, then
		 \begin{equation}\label{eq2}
			 \lim_{\delta\to 0} U(\delta)N(t,\delta)=t
		 \end{equation} 
		 almost surely.
	 \end{theorem}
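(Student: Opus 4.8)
The plan is to read $N(t,\delta)$ as a renewal counting process and to reduce the continuum limit $\delta\to 0$ to a countable one. Since $\{N(t,\delta)\geq k\}=\{T_k(\delta)\leq t\}$, we have $N(t,\delta)=\#\{k\geq 1:\,T_k(\delta)\leq t\}$, where $T_k(\delta)=\sum_{i=1}^k\eta_i$ is a sum of i.i.d.\ copies of $\eta_1=T_1(\delta)$ with mean $\mathbb{E}[\eta_1]=U(\delta)$. Thus $U(\delta)N(t,\delta)\to t$ is a law-of-large-numbers statement in the regime where the step mean $U(\delta)\to 0$, equivalently the number of renewals before $t$ tends to infinity; the exclusion of the compound Poisson case enters precisely to guarantee $U(0+)=0$, i.e.\ $\Phi(\infty)=\infty$. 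I would first prove the statement along a geometric sequence $\delta_n$ via Borel--Cantelli, and then fill the gaps by a monotonicity sandwich.

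The analytic heart is a quantitative control of the first two moments of $\eta_1$. Applying Markov's inequality to \eqref{eq:Laplace} gives, for every $\lambda>0$,
\begin{equation*}
\mathbb{P}\lbrb{T_1(\delta)>t}=\mathbb{P}\lbrb{X_t\leq\delta}\leq e^{\lambda\delta}\,\mathbb{E}\lbbrbb{e^{-\lambda X_t}}=e^{\lambda\delta-t\Phi(\lambda)}.
\end{equation*}
Choosing $\lambda=1/\delta$ yields an exponential tail for $T_1(\delta)$ and hence
\begin{equation*}
\mathbb{E}\lbbrbb{T_1(\delta)^2}=2\int_0^\infty t\,\mathbb{P}\lbrb{T_1(\delta)>t}\,dt\leq \frac{2e}{\Phi(1/\delta)^2}.
\end{equation*}
I would combine this with the potential comparison $U(\delta)\asymp 1/\Phi(1/\delta)$, derived from the identity $\int_0^\infty e^{-\lambda x}\,U(dx)=1/\Phi(\lambda)$ and the subadditivity of $U$: restricting the integral to $[0,1/\lambda]$ gives $U(\delta)\leq e/\Phi(1/\delta)$, while $U(x)\leq(\lambda x+1)U(1/\lambda)$ gives $1/\Phi(1/\delta)\leq 2U(\delta)$. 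These yield $\mathbb{E}[T_1(\delta)^2]\leq C\,U(\delta)^2$ for an absolute constant $C$, so that $\mathrm{Var}(\eta_1)=O(U(\delta)^2)$.

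With this estimate I would run Borel--Cantelli. Fix $c\in(0,1)$ and define $\delta_n$ by $U(\delta_n)=c^n$, which is possible for large $n$ since $U$ is continuous with $U(0+)=0$. For $\epsilon>0$ set $k_n^{\pm}=\lceil(1\pm\epsilon)t/U(\delta_n)\rceil$. Then $\mathbb{E}[T_{k_n^{+}}]\geq(1+\epsilon)t$ and $\mathrm{Var}(T_{k_n^{+}})=k_n^{+}\,\mathrm{Var}(\eta_1)=O\lbrb{c^n t}$, so Chebyshev's inequality gives
\begin{equation*}
\mathbb{P}\lbrb{N(t,\delta_n)\geq k_n^{+}}=\mathbb{P}\lbrb{T_{k_n^{+}}\leq t}\leq\frac{\mathrm{Var}(T_{k_n^{+}})}{\epsilon^2 t^2}=O\lbrb{\frac{c^n}{\epsilon^2 t}},
\end{equation*}
and symmetrically $\mathbb{P}(N(t,\delta_n)<k_n^{-})=\mathbb{P}(T_{k_n^{-}}>t)=O(c^n/(\epsilon^2 t))$. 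Both bounds are summable in $n$, so Borel--Cantelli yields $U(\delta_n)N(t,\delta_n)\to t$ almost surely along the sequence, after intersecting the exceptional null sets over a countable family $\epsilon_j\downarrow 0$.

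Finally I would remove the restriction to the sequence by monotonicity: $N(t,\cdot)$ is non-increasing and $U(\cdot)$ non-decreasing, so for $\delta\in[\delta_{n+1},\delta_n]$ the bounds $c^{n+1}\leq U(\delta)\leq c^{n}$ and $N(t,\delta_n)\leq N(t,\delta)\leq N(t,\delta_{n+1})$ give
\begin{equation*}
c\,\lbbrbb{U(\delta_n)N(t,\delta_n)}\leq U(\delta)N(t,\delta)\leq \frac{1}{c}\,\lbbrbb{U(\delta_{n+1})N(t,\delta_{n+1})}.
\end{equation*}
On the full-probability event above this forces $c\,t\leq\liminf_{\delta\to0}U(\delta)N(t,\delta)$ and $\limsup_{\delta\to0}U(\delta)N(t,\delta)\leq t/c$; taking $c=c_m\uparrow 1$ over a countable family and intersecting the events gives \eqref{eq2}. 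The main obstacle is the quantitative moment control of the second paragraph, namely $\mathbb{E}[T_1(\delta)^2]=O(U(\delta)^2)$ together with $U(\delta)\asymp 1/\Phi(1/\delta)$; once these are secured the probabilistic skeleton (Chebyshev, Borel--Cantelli, the monotone sandwich) is routine, and it is exactly here that the non-compound-Poisson hypothesis is used.
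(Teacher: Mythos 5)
Your proposal is correct, but it takes a genuinely different route from the paper's. The paper works directly with the box count $N$: it first proves convergence in probability via a Laplace-transform computation for $M(\alpha,\delta)=\sum_{i\leq[\alpha/U(\delta)]}\eta_i$ (Lemma \ref{L1}), then decomposes $N(1,\delta)$ in distribution into $j$ i.i.d.\ copies of $N(1/j,\delta)$ plus a bounded error (Lemma \ref{L3}), imports a small-deviation estimate of Linde and Shi to get the uniform tail bound $\mathbb{P}\lbrb{N(t,\delta)\geq x}\leq e^{2C_at/U(\delta)-x/8}$ (Lemma \ref{L4}) and hence the second-moment bound $\mathbb{E}\lbbrbb{N^2(t,\delta)}\leq Mt^2/U^2(\delta)+K$ (Lemma \ref{L5}); the a.s.\ convergence along $U(\delta_j)=r^j$ then requires both Chebyshev applied to the decomposition and, separately, the identification $\lim_{\delta\to0}\mathbb{E}\lbbrbb{U(\delta)N(1,\delta)}=1$, which is exactly where Lemma \ref{L1} and a uniform-integrability argument via Lemma \ref{L5} are consumed. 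You instead dualize to the passage times: since $\{N(t,\delta)\geq k\}=\{T_k(\delta)\leq t\}$ and $\mathbb{E}\lbbrbb{T_k(\delta)}=kU(\delta)$ \emph{exactly}, no control of $\mathbb{E}\lbbrbb{N}$ is ever needed, and the entire argument reduces to the single estimate $\mathrm{Var}(T_1(\delta))=O(U^2(\delta))$, which you obtain from a Chernoff bound at $\lambda=1/\delta$ together with $U(\delta)\asymp 1/\Phi(1/\delta)$ (re-derived from subadditivity of $U$ and the Laplace transform of the potential measure, rather than cited from Bertoin). Chebyshev at $k\approx(1\pm\epsilon)t/U(\delta_n)$, Borel--Cantelli along $U(\delta_n)=c^n$, and the monotone sandwich --- this last step being essentially identical to the paper's proof of the theorem from Lemma \ref{L2} --- finish the argument, and all of these steps check out. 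Your route is more elementary and self-contained: it bypasses the Linde--Shi input, the distributional decomposition, and the expectation-identification step entirely. What the paper's heavier machinery buys is intermediate results of independent interest: in-probability convergence for every $t$, and tail and moment bounds for $N(t,\delta)$ itself that hold with absolute constants not depending on the subordinator. One small gloss you share with the paper: both assert that $\delta_n$ with $U(\delta_n)=c^n$ can be chosen, which uses continuity of $U$ (standard for non-compound-Poisson subordinators, since the potential measure is then diffuse) and not merely $U(0+)=0$.
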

      \begin{remark}\label{rem:R1}
      We note that this result improves the result presented in \cite[Ch. 5, Th. 5.1]{B99} wherein 
      \begin{equation}\label{eq:lowerIndex}
      \liminf\tto{\delta}\frac{\ln\lbrb{N(t,\delta)}}{\ln\lbrb{\frac{1}{\delta}}}=\liminf\ttinf{\lambda}\frac{\ln\lbrb{\Phi(\lambda)}}{\ln\lbrb{\lambda}}=\colon\underline{\mathrm{ind}}(\Phi)
      \end{equation}
      and  \begin{equation}\label{eq:upperIndex}
      \limsup\tto{\delta}\frac{\ln\lbrb{N(t,\delta)}}{\ln\lbrb{\frac{1}{\delta}}}=\limsup\ttinf{\lambda}\frac{\ln\lbrb{\Phi(\lambda)}}{\ln\lbrb{\lambda}}=\colon\overline{\mathrm{ind}}(\Phi).
      \end{equation}
      The quantity $\liminf\tto{\delta}\frac{\ln\lbrb{N(t,\delta)}}{\ln\lbrb{\frac{1}{\delta}}}$ is known as the lower box-counting dimension whereas\\[-0.1cm] $\limsup\tto{\delta}\frac{\ln\lbrb{N(t,\delta)}}{\ln\lbrb{\frac{1}{\delta}}}$ is the upper box-counting dimension, see \cite[Chap. 5]{B99}.
      \begin{remark}\label{rem:R2}
       Note that the relation $U(\delta)\asymp \frac{1}{\Phi\lbrb{\frac{1}{\delta}}}$, i.e. $\frac{C_1}{\Phi\lbrb{\frac{1}{\delta}}}\leq U(\delta)\leq \frac{C_2}{\Phi\lbrb{\frac{1}{\delta}}}$ for two absolute constants $0<C_1<C_2<\infty$, see \cite[Ch III,Prop. 1]{B96}, which leads to $\frac{\labsrabs{\ln\lbrb{U(\delta)}}}{\ln\lbrb{\frac{1}{\delta}}}\sim \frac{\labsrabs{\ln{\Phi\lbrb{\frac{1}{\delta}}}}}{\ln\lbrb{\frac{1}{\delta}}}$ almost surely, shows in alternative way, due to Theorem \ref{t1}, that it may happen that $\underline{\mathrm{ind}}(\Phi)<\overline{\mathrm{ind}}(\Phi)$. Theorem \ref{t1}, however, further demonstrates that the scale $\ln\lbrb{\frac{1}{\delta}}$ is not the right one for $\ln\lbrb{N(t,\delta)}$ and shows the existence of a correct deterministic scale even for $N(t,\delta)$.
      \end{remark}
      \begin{remark}\label{rem:R3}
        The notion of lower and upper box dimension is tightly related to the notion of packing dimension and packing measure. In fact $\overline{\mathrm{ind}}(\Phi)=\mathrm{dim}_P(X),$ see e.g. \cite[Chap. 5, p.42]{B99}, where $\mathrm{dim}_P(X)$ is the packing dimension of the range of the subordinator $X$ for $t=1$. Moreover, the possible packing measures generated by the measure functions $\varphi$ have been extensively studied, see \cite{FT92} for more detail. The notable conclusion is that unless the subordinator is not of Cauchy type, see \cite[Section 4]{FT92}, then  the $\varphi$-packing measure is either zero or infinity. To our understanding, this does not allow, these otherwise very refined results, to shed light on the problem we discuss, i.e. the a.s.\,\,behaviour of $N(t,\delta)$.
      \end{remark}
      \end{remark}
      \section{Some applications}
      The first remark is the very precise a.s.\,behaviour we can obtain for $N(t,\delta),$ as $\delta\to 0$, whenever $d>0$.
      \begin{cor}\label{cor:d>0}
      Let $d>0$. We have that a.s., for any $t>0$, as $x\to 0$,
      \begin{align}\label{eq:AsympExpd>0}
       N(t,x)\sim \frac{t}{\sum_{n\geq 0}\frac{(-1)^n}{d^{n+1}}\int_{0}^{x}1*\bar{\Pi}^{*n}(y)dy},
      \end{align}
      where $f*g(x)=\int_{0}^{x}f(x-y)g(y)dy$ and $\bar{\Pi}(x)=\Pi\lbrb{x,\infty}$.
      \end{cor}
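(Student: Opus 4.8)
The plan is to combine Theorem~\ref{t1} with an explicit expansion of the potential function $U$ valid near $0$. Theorem~\ref{t1} gives $U(x)N(t,x)\to t$ almost surely, i.e.\ $N(t,x)\sim t/U(x)$ a.s.\ as $x\to0$; hence it suffices to establish the deterministic identity
\[
U(x)=\sum_{n\geq0}\frac{(-1)^n}{d^{n+1}}\int_0^x 1*\bar\Pi^{*n}(y)\,dy
\]
for all sufficiently small $x$, and then substitute it into $N(t,x)\sim t/U(x)$ to obtain \eqref{eq:AsympExpd>0}.

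First I would compute the Laplace transform of $U$. By \eqref{eq:Q-Potentials} with $q=0$ we have $U(x)=\int_0^\infty\P(X_s\leq x)\,ds$, so $U$ is the distribution function of the potential measure and $\int_0^\infty e^{-\lambda x}U(x)\,dx=1/(\lambda\Phi(\lambda))$. Since $d>0$ forces $X_s\geq ds$, we get $U(x)=\E[T_1(x)]\leq x/d$, so every transform below converges for $\lambda>0$. Writing $\Phi(\lambda)=d\lambda+\lambda\widehat{\bar\Pi}(\lambda)$ with $\widehat{\bar\Pi}(\lambda):=\int_0^\infty e^{-\lambda y}\bar\Pi(y)\,dy<\infty$ (finite because $\int_0^1\bar\Pi(y)\,dy=\int_0^\infty\min\{1,y\}\Pi(dy)<\infty$), and denoting by $\widehat U$ the transform of $U$, the identity $\lambda\Phi(\lambda)\widehat U(\lambda)=1$ becomes, after dividing by $\lambda^2$,
\[
d\,\widehat U(\lambda)+\widehat{\bar\Pi}(\lambda)\,\widehat U(\lambda)=\lambda^{-2}.
\]
Since each term is the transform of a continuous function, uniqueness of the Laplace transform lets me invert this into the Volterra renewal equation $d\,U(x)+\bar\Pi*U(x)=x$ for $x\geq0$.

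Finally I would solve this equation by Picard iteration. Rewriting it as $U=\tfrac1d\,\mathrm{id}-\tfrac1d\,\bar\Pi*U$, where $\mathrm{id}(x)=x$, and iterating produces the Neumann series $U(x)=\sum_{n\geq0}\left(-\tfrac1d\right)^n\bar\Pi^{*n}*\left(\tfrac1d\,\mathrm{id}\right)(x)$; since $\mathrm{id}=1*1$, associativity of convolution identifies the $n$-th term with $\frac{(-1)^n}{d^{n+1}}\int_0^x 1*\bar\Pi^{*n}(y)\,dy$, which is exactly the desired expression. The main obstacle is the convergence of this series, and here I would exploit that $\bar\Pi\geq0$ is non-increasing: with $G(x):=\int_0^x\bar\Pi(y)\,dy=1*\bar\Pi(x)$, an induction on $n$ using $G(x-w)\leq G(x)$ gives $\int_0^x\bar\Pi^{*n}(z)\,dz\leq G(x)^n$, so the $n$-th term is bounded in absolute value by $\frac{x}{d}\,(G(x)/d)^n$. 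As $G(x)\to0$ when $x\to0$, for every $x$ with $G(x)<d$ the series converges absolutely, the Volterra equation has a unique solution on $[0,x]$, and the displayed identity for $U$ holds on a neighbourhood of $0$. Inserting it into $N(t,x)\sim t/U(x)$ from Theorem~\ref{t1} then yields \eqref{eq:AsympExpd>0}.
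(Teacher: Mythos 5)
Your proposal is correct, and its overall skeleton matches the paper's: Theorem~\ref{t1} reduces everything to the deterministic identity $U(x)=\sum_{n\geq0}\frac{(-1)^n}{d^{n+1}}\int_0^x 1*\bar\Pi^{*n}(y)\,dy$ for small $x$. The difference is in how that identity is obtained. The paper disposes of it in one line by citing \cite[Prop.~1]{DS11}, which gives the series representation of the potential \emph{density} $u(x)=\frac{dU(x)}{dx}$ (valid when $d>0$) for general $q$-potentials; setting $q=0$ and integrating yields the denominator. You instead re-derive the integrated statement from scratch: the transform identity $\int_0^\infty e^{-\lambda x}U(x)\,dx=1/(\lambda\Phi(\lambda))$, the decomposition $\Phi(\lambda)=d\lambda+\lambda\widehat{\bar\Pi}(\lambda)$, inversion to the Volterra equation $dU+\bar\Pi*U=\mathrm{id}$, and the Neumann series with the bound $\int_0^x\bar\Pi^{*n}(z)\,dz\leq G(x)^n$ — which is essentially how the cited proposition is proved in \cite{DS11}. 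What your route buys: it is self-contained, and by working with $U$ directly rather than with $u$ you never need to establish that the potential measure has a density. What it gives up: your geometric-series bound only yields convergence on a neighbourhood of $0$ where $G(x)<d$ (sufficient for the asymptotic claim as $x\to0$, so no gap), whereas the cited result gives the representation globally; also, in the inversion step you should say a word about why a.e.\ equality upgrades to everywhere (e.g.\ $U$ is monotone right-continuous and $x-\bar\Pi*U$ is continuous), though this is routine.
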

      The second remark is also immediate but we formulate it for convenience. Note that the information at logarithmic level, namely using the available results about $\ln\lbrb{N(t,\delta)}$, hides away a good deal of precision as to the fluctuations of $N(t,x)$ which are due to a second order variation in $\Phi\lbrb{\lambda}$. This is particularly apparent when $\alpha=0$ in the statement.
      \begin{cor}\label{cor:RV}
      Let $\Phi(\lambda)\sim \lambda^\alpha L(\lambda),$ as $\lambda\to\infty$, $\alpha\in[0,1]$ and $L$ a slowly varying function. Then 
      \begin{align}\label{eq:AsympExpRV}
             N(t,\delta)\stackrel{a.s.}{\sim} t\frac{\Gamma(1+\alpha)}{\delta^{\alpha}}L\lbrb{\frac{1}{\delta}},
            \end{align}
                  where $\Gamma(x)$ is the celebrated Gamma function.
      \end{cor}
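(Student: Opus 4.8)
The plan is to derive the corollary from Theorem \ref{t1} by a purely analytic (Tauberian) computation of the potential function. Rewriting \eqref{eq2}, Theorem \ref{t1} says that almost surely $N(t,\delta)\sim t/U(\delta)$ as $\delta\to 0$, since $U(\delta)\to 0$ and $t>0$. Thus the random part of the statement is already done, and everything reduces to the deterministic claim that the hypothesis $\Phi(\lambda)\sim\lambda^{\alpha}L(\lambda)$ as $\lambda\to\infty$ forces
\[
U(\delta)\sim\frac{\delta^{\alpha}}{\Gamma(1+\alpha)\,L(1/\delta)},\qquad \delta\to 0;
\]
substituting this into $N(t,\delta)\sim t/U(\delta)$ then yields \eqref{eq:AsympExpRV} verbatim.

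To obtain the displayed asymptotics for $U$, I would first record the Laplace--Stieltjes transform of the potential measure $U(dx)=\int_0^\infty\mathbb{P}(X_t\in dx)\,dt$, whose distribution function is exactly $U(\delta)=U([0,\delta])$. Integrating \eqref{eq:Laplace} in $t$ and applying Fubini gives
\[
\int_{[0,\infty)}e^{-\lambda x}\,U(dx)=\int_0^\infty\mathbb{E}\!\left[e^{-\lambda X_t}\right]dt=\int_0^\infty e^{-t\Phi(\lambda)}\,dt=\frac{1}{\Phi(\lambda)},\qquad\lambda>0.
\]
Under the hypothesis, $1/\Phi(\lambda)\sim\lambda^{-\alpha}\tilde L(\lambda)$ as $\lambda\to\infty$, where $\tilde L:=1/L$ is again slowly varying. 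Since $U$ is nondecreasing with $U(0)=0$ (there is no atom at the origin because $X$ is not a compound Poisson process), Karamata's Tauberian theorem (see, e.g., \cite{B96}) transfers the regular variation of index $-\alpha$ of the transform at $\infty$ into small-$\delta$ regular variation of $U$, producing precisely the displayed asymptotics, with the slowly varying argument converted from $\lambda$ to $1/\delta$.

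The substantive point --- and the reason the two-sided bound $U(\delta)\asymp 1/\Phi(1/\delta)$ of Remark \ref{rem:R2} does not suffice --- is the exact constant $\Gamma(1+\alpha)$, which is delivered only by the full Tauberian theorem and lost in the $\asymp$ estimate; indeed the argument refines that remark to $U(\delta)\sim \frac{1}{\Gamma(1+\alpha)\,\Phi(1/\delta)}$. The main care needed is therefore bookkeeping: verifying the hypotheses of the Tauberian theorem (finiteness of the transform for $\lambda>0$, and $\Phi(\infty)=\infty$, both guaranteed by the non-compound-Poisson assumption), and checking that the endpoint exponents $\alpha=0$ and $\alpha=1$ are included, where $\Gamma(1+\alpha)=1$; in particular the case $\alpha=0$ is exactly where the slowly varying factor, invisible at the logarithmic scale of Remark \ref{rem:R1}, governs the leading-order behaviour of $N(t,\delta)$.
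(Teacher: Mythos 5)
Your proposal is correct and takes essentially the same route as the paper: the paper also combines Theorem \ref{t1} with the asymptotic $U(\delta)\sim \frac{1}{\Gamma(1+\alpha)\Phi(1/\delta)}$, which it simply cites from \cite[Ch III, p. 75]{B96} rather than deriving. Your Karamata--Tauberian argument applied to $\int_{[0,\infty)}e^{-\lambda x}U(dx)=1/\Phi(\lambda)$ is precisely the proof of that cited fact, so you have just unpacked the reference.
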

      
      Next, we turn our attention to an interesting property that can be deduced using Theorem \ref{t1}. Put
      \begin{equation}\label{eq:HausdorffFunction}
      f(x)=\frac{\ln{|\ln x|}}{\Phi\lbrb{\frac{\ln{|\ln x|}}{x}}}.
      \end{equation}
      Then it is known from \cite{FP71} that with $I_.$ standing for a generic finite interval
      \begin{equation}\label{eq:measure}
      \lim\tto{\delta}\lbrb{\inf\big\{\sum_{i}f(I_i):\,\lbrb{X_s}_{s\leq 1}\subset \bigcup_i I_i;\,\max_{i}|I_i|<\delta \big\}}=1\quad\text{a.s.}
      \end{equation}
      The following result shows that usually the efficient covering of the set $\lbrb{X_s}_{s\leq 1}$ is not achieved by intervals of length proportionate to $\delta$, as $\delta\to 0$. For any finite collection of sets $\mathcal{A}_\delta=\{i:\,|I_i|<\delta\}$ denote by $\mathcal{A}^c_\delta=\{I\in\mathcal{A}_\delta:|I|>c\delta\}$, for any $c\in(0,1)$. Then we have the following result.
      \begin{proposition}\label{prop:1}
      Let $X$ be a subordinator satisfying 
      \begin{equation}\label{eq:notCauchy}
      \lim\ttinf{x}\frac{\Phi(x)\ln\ln(x)}{\Phi\lbrb{x\ln\ln(x)}}=\infty
      \end{equation}
      then if $\limsup\tto{\delta}\sum_{I_i\in \mathcal{A}_\delta}f(I_i)<\infty$, for some collection of coverings of $\lbrb{X_s}_{s\leq 1}$, that is,  $\lbrb{\mathcal{A}_\delta}_{\delta<1}$, we have that, for any $c\in(0,1)$,
      \begin{equation}\label{eq:result}
      \lim\tto{\delta}\frac{\labsrabs{\mathcal{A}^c_\delta}}{N(1,\delta)}=0\quad \text{a.s.}
      \end{equation}
      \end{proposition}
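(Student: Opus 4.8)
The plan is to decouple the statement into a deterministic counting bound on $|\mathcal{A}^c_\delta|$, extracted from the finiteness of the gauge sum, and the almost sure lower bound on $N(1,\delta)$ supplied by Theorem \ref{t1}. First I would record that the gauge $f$ of \eqref{eq:HausdorffFunction} is non-decreasing on a neighbourhood of $0$; this follows from \eqref{eq:HausdorffFunction} together with the monotonicity of $\Phi$, since for small $x$ the factor carrying the power of $x$ dominates the very slowly varying nested logarithm. Every $I\in\mathcal{A}^c_\delta$ satisfies $c\delta<|I|<\delta$, so monotonicity gives $f(|I|)\ge f(c\delta)$, whence
\[
|\mathcal{A}^c_\delta|\,f(c\delta)\le\sum_{I_i\in\mathcal{A}^c_\delta}f(I_i)\le\sum_{I_i\in\mathcal{A}_\delta}f(I_i).
\]
By hypothesis the right-hand side is bounded by some finite constant $C$ for all small $\delta$, and therefore $|\mathcal{A}^c_\delta|\le C/f(c\delta)$.

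Next I would invoke Theorem \ref{t1} with $t=1$: almost surely $U(\delta)N(1,\delta)\to 1$, and by the comparison $U(\delta)\le C_2/\Phi(1/\delta)$ recorded in Remark \ref{rem:R2} this yields, almost surely and for all $\delta$ small enough, the lower bound $N(1,\delta)\ge\Phi(1/\delta)/(2C_2)$. Combining this with the counting bound of the previous paragraph gives
\[
\frac{|\mathcal{A}^c_\delta|}{N(1,\delta)}\le\frac{2CC_2}{f(c\delta)\,\Phi(1/\delta)}\qquad\text{a.s. for small }\delta,
\]
so that the entire probabilistic content of the proposition has now been reduced to the deterministic claim $f(c\delta)\,\Phi(1/\delta)\to\infty$ as $\delta\to 0$.

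To evaluate this I would set $x=1/\delta\to\infty$ and expand \eqref{eq:HausdorffFunction} at the point $c\delta=c/x$. Since $\ln|\ln(c\delta)|=\ln(\ln x-\ln c)=(1+o(1))\ln\ln x$, the argument of $\Phi$ appearing in $f(c\delta)$ equals $(1+o(1))(x/c)\ln\ln x$, and hence
\[
\frac{1}{f(c\delta)\,\Phi(1/\delta)}=(1+o(1))\,\frac{\Phi\!\lbrb{(x/c)\ln\ln x}}{\ln\ln x\,\,\Phi(x)}.
\]
Finally, because $\Phi$ is a Bernstein function, $\Phi(\lambda)/\lambda$ is non-increasing (it is concave with $\Phi(0)\ge 0$), so $\Phi(\kappa\lambda)\le\kappa\Phi(\lambda)$ for every $\kappa\ge 1$; taking $\kappa=1/c$ bounds the numerator above by $(1/c)\,\Phi(x\ln\ln x)$, and the whole ratio by
\[
\frac{1}{c}\,\frac{\Phi\!\lbrb{x\ln\ln x}}{\ln\ln x\,\,\Phi(x)},
\]
which tends to $0$ precisely by the reciprocal form of hypothesis \eqref{eq:notCauchy}. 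The main obstacle is the careful asymptotic bookkeeping of the nested logarithms in $f(c\delta)$ and the verification that $f$ is monotone near $0$; once the spurious constant $1/c$ has been absorbed by the subadditivity of $\Phi$, the conceptual crux — that \eqref{eq:notCauchy} is exactly the condition needed — becomes transparent.
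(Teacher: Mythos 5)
Your proposal follows the same route as the paper's proof: bound $|\mathcal{A}^c_\delta|$ through the finiteness of the gauge sums, convert $N(1,\delta)$ into $\Phi(1/\delta)$ via Theorem \ref{t1} together with $U(\delta)\asymp 1/\Phi(1/\delta)$, and then recognize the resulting deterministic quantity $f(c\delta)\Phi(1/\delta)$ as (a constant multiple of) the expression in \eqref{eq:notCauchy}, the constant $1/c$ being absorbed by the subadditivity bound $\Phi(\kappa\lambda)\le\kappa\Phi(\lambda)$ for $\kappa\ge 1$. The paper phrases this as a contradiction while you argue directly, but the content, including the nested-logarithm bookkeeping $\ln|\ln(c\delta)|=(1+o(1))\ln\ln(1/\delta)$, is identical.

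The one genuine flaw is your opening claim that $f$ is non-decreasing on a neighbourhood of $0$, which is what you use to write $\sum_{I_i\in\mathcal{A}^c_\delta}f(I_i)\ge|\mathcal{A}^c_\delta|\,f(c\delta)$. Your justification ("the factor carrying the power of $x$ dominates") tacitly assumes $\Phi$ is regularly varying of positive index, which is not part of the hypotheses; in general $f(x)=\ln|\ln x|/\Phi\left(\ln|\ln x|/x\right)$ is a ratio of two quantities that \emph{both} increase as $x\downarrow 0$, and monotonicity can fail. Indeed, if $\Phi$ stays essentially constant, equal to some level $C_k$, over a long range of scales (a behaviour fully compatible with \eqref{eq:notCauchy}, since flatness makes the ratio there of order $\ln\ln x\to\infty$; take, e.g., $\Pi$ supported on a very sparse set of points with comparable consecutive levels $C_k$), then for the corresponding range of $x$ one has $f(x)\approx \ln|\ln x|/C_k$, which \emph{grows} as $x$ decreases, so the inequality $f(|I_i|)\ge f(c\delta)$ for $|I_i|\in(c\delta,\delta)$ can fail. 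The repair is exactly the constant-factor comparison the paper uses in place of monotonicity, and which you in effect re-derive in your last paragraph: for $|I|=a\delta$ with $a\in[c,1]$ and $\delta$ small, one has $\ln|\ln\delta|\le\ln|\ln(a\delta)|\le\tfrac32\ln|\ln\delta|$ and, by monotonicity plus subadditivity of $\Phi$,
\begin{equation*}
\Phi\left(\frac{\ln|\ln(a\delta)|}{a\delta}\right)\le\Phi\left(\frac{3}{2c}\cdot\frac{\ln|\ln\delta|}{\delta}\right)\le\frac{3}{2c}\,\Phi\left(\frac{\ln|\ln\delta|}{\delta}\right),
\end{equation*}
whence $f(|I|)\ge\frac{2c}{3}f(\delta)$ uniformly in $a\in[c,1]$. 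Since the final quantity $f(\delta)\Phi(1/\delta)$ diverges, this constant is harmless; substituting this uniform bound for the unjustified monotonicity claim makes your argument complete, and then it coincides with the paper's proof.
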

      \begin{remark}\label{R5}
      Note that using \cite[Chap. 3, Prop 1]{B96} one checks that $\Phi(x)f(x)\geq 1$. Also \eqref{eq:notCauchy} may fail to hold whenever $\Phi(x)\sim xL(x),$ as $x\to\infty$ with $L(x)$-\,some slowly varying function. However, if $\Phi(x)$ is not close to linear behaviour then \eqref{eq:notCauchy} does hold and one deduces from \eqref{eq:result} that the number of the intervals proportionate to $\delta$ in efficient coverings of $\lbrb{X_s}_{s\leq 1}$ is of a smaller magnitude than the most economic covering $N(1,\delta)$. 
      \end{remark}
	 \section{ Proof of Theorem \ref{t1}}

	 We start the proof by showing a couple of auxiliary results. 

	\begin{lemma}\label{L1}
		If $X$ is a subordinator with $\Pi\lbrb{0,\infty}=\infty$ or, if\,\,\,$\Pi\lbrb{0,\infty}<\infty$, then $d>0$, the following convergence holds:
		\begin{equation}\label{eq1}
			\lim_{\delta\to 0}U(\delta)N(t,\delta)=t\quad \text{ in probability}.
		\end{equation}
	\end{lemma}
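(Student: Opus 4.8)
The plan is to read off a law of large numbers from the renewal structure already present in the statement. Since $\{N(t,\delta)\geq k\}=\{T_k(\delta)\leq t\}$ and $T_k(\delta)=\sum_{i=1}^{k}\eta_i$ is a sum of i.i.d.\ copies of $T_1(\delta)$ with common mean $U(\delta)=\mathbb{E}[T_1(\delta)]$, the variable $N(t,\delta)$ is exactly the renewal counting function of the inter-arrival law of $T_1(\delta)$. The non-compound-Poisson hypothesis guarantees $X_t>0$ a.s.\ for every $t>0$, so that $g_\delta(t):=\mathbb{P}(X_t\leq\delta)\to 0$ as $\delta\to 0$ for each fixed $t>0$, whence $U(\delta)\to 0$; this is precisely what fails in the compound Poisson case, where $U(0+)>0$. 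The convergence will follow once I show the concentration statement that, for each fixed $s>0$, writing $k=k(\delta)=\lfloor s/U(\delta)\rfloor$, one has $T_k(\delta)\to s$ in probability as $\delta\to 0$. Granting this, the two-sided bound is routine: for $\varepsilon>0$ and $k_+=\lceil(t+\varepsilon)/U(\delta)\rceil$ we have $\{U(\delta)N(t,\delta)>t+\varepsilon\}\subseteq\{T_{k_+}\leq t\}$ with $k_+U(\delta)\to t+\varepsilon>t$, so this event has vanishing probability; the lower bound is symmetric with $k_-=\lceil(t-\varepsilon)/U(\delta)\rceil$ and $\{T_{k_-}>t\}$.

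For the concentration of $T_k(\delta)$ I pass to Laplace transforms. By the second identity in \eqref{eq:Q-Potentials} we have $\mathbb{E}[e^{-qT_1(\delta)}]=1-qU_q(\delta)$, and by independence $\mathbb{E}[e^{-qT_k(\delta)}]=(1-qU_q(\delta))^k$. Since $U_q(\delta)\leq U(\delta)\to 0$, expanding the logarithm gives
\[
\ln\mathbb{E}\big[e^{-qT_k(\delta)}\big]=-k\,q\,U_q(\delta)+O\big(k\,U_q(\delta)^2\big),
\]
where the error is $O(sU(\delta))\to 0$ because $k\asymp s/U(\delta)$. As $kU(\delta)\to s$, the leading term equals $-q\,(kU(\delta))\,\tfrac{U_q(\delta)}{U(\delta)}$, so the whole argument reduces to the single limit $\lim_{\delta\to 0}U_q(\delta)/U(\delta)=1$ for each $q>0$. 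Once this is in hand, $\mathbb{E}[e^{-qT_k(\delta)}]\to e^{-qs}$ for all $q>0$, which is the Laplace transform of the point mass at $s$; hence $T_k(\delta)\to s$ in distribution, and since the limit is deterministic, also in probability.

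The crux, and the step I expect to be the main obstacle, is the potential ratio $U_q(\delta)/U(\delta)\to 1$. Using $\{T_1(\delta)>t\}=\{X_t\leq\delta\}$ (valid because $X$ is non-decreasing) one writes $U(\delta)=\int_0^\infty g_\delta(t)\,dt$ and $U(\delta)-U_q(\delta)=\int_0^\infty(1-e^{-qt})g_\delta(t)\,dt$; rescaling time by $t=U(\delta)u$ turns the ratio into
\[
1-\frac{U_q(\delta)}{U(\delta)}=\int_0^\infty\big(1-e^{-qU(\delta)u}\big)\,h_\delta(u)\,du,\qquad h_\delta(u):=g_\delta\big(U(\delta)u\big),
\]
where $\int_0^\infty h_\delta(u)\,du=1$. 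The key structural input is submultiplicativity: independence and stationarity of the increments together with $X\geq 0$ give $g_\delta(t+s)\leq g_\delta(t)g_\delta(s)$, hence $h_\delta(u+v)\leq h_\delta(u)h_\delta(v)$. As $h_\delta$ is non-increasing with unit integral we get $h_\delta(u)\leq 1/u$, so $h_\delta(2)\leq 1/2$ uniformly in $\delta$, and submultiplicativity yields $h_\delta(2n)\leq 2^{-n}$; consequently $\int_M^\infty h_\delta(u)\,du\to 0$ as $M\to\infty$ uniformly in $\delta$. Splitting the integral at a fixed large $M$, the tail is uniformly small, while on $[0,M]$ the bound $1-e^{-qU(\delta)u}\leq qU(\delta)u$ forces $\int_0^M(\cdots)\,du\leq qU(\delta)M^2/2\to 0$. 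This establishes $U_q(\delta)/U(\delta)\to 1$ and closes the argument.
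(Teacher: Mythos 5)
Your proof is correct, and it follows the paper's overall skeleton — identify $N(t,\delta)$ as a renewal counting process, compute the Laplace transform $\mathbb{E}\big[e^{-qT_k(\delta)}\big]=(1-qU_q(\delta))^k$ of the renewal sums with $k\approx s/U(\delta)$, expand the logarithm with an $O(kU_q^2(\delta))=o(1)$ error, and reduce everything to the single limit $\lim_{\delta\to 0}U_q(\delta)/U(\delta)=1$ — but your treatment of that crux limit is genuinely different from the paper's. The paper splits the occupation integral $U(\delta)=\mathbb{E}\int_0^\infty 1_{\{X_t\leq \delta\}}\,dt$ at an independent exponential time $e_q$ and uses the Markov property to bound the post-$e_q$ contribution by $U(\delta)\mathbb{P}(T_1(\delta)>e_q)=o(1)U(\delta)$, the $o(1)$ coming from $T_1(\delta)\to 0$ in probability; this yields $U(\delta)\leq U_q(\delta)+o(1)U(\delta)$ in a few lines. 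You instead rescale time by $U(\delta)$ and exploit submultiplicativity of $t\mapsto \mathbb{P}(X_t\leq \delta)$ (a consequence of nonnegativity together with independence and stationarity of increments) to obtain the uniform-in-$\delta$ geometric tail bound $h_\delta(2n)\leq 2^{-n}$ for the rescaled function $h_\delta$, and then conclude by splitting the integral at a fixed level $M$. Both arguments are sound and elementary; yours is slightly longer but more quantitative and self-contained, since the uniform tail estimate controls the mass escaping to large times all at once without invoking the Markov property at a random time, whereas the paper's is shorter at the cost of leaning on that probabilistic decomposition. Your final two-sided bound converting $T_{k_\pm}(\delta)\to t\pm\varepsilon$ into the statement about $N(t,\delta)$ is the same as the paper's concluding step with $M(\gamma,\delta)$ for $\gamma<t$ and $\gamma>t$.
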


	\begin{proof}
		First note that the definitions above directly imply that $\Pbb{N(t,\delta)\geq k}=\Pbb{\sum_{i=1}^{k}\eta_{i}\leq t}$, where we recall that $\eta_i=T_{i}-T_{i-1}$. Let us first study the sequence of random variables $M(\alpha,\delta):=\sum_{i=1}^{[\frac{\alpha} {U(\delta)}]}\eta_{i}$, where $[x]=\max\{n\geq 0:x\geq n\}$. We compute their Laplace transform, using the fact that  the random variables $\eta_i$ are independent and identically distributed, to get
		\begin{align*}
			\Ebb{e^{-\theta M(\alpha,\delta)}}=\lbrb{\Ebb{e^{-\theta \eta_{1}}}}^{[\frac{\alpha} {U(\delta)}]}=\lbrb{\Ebb{e^{-\theta T_{1}(\delta)}}}^{[\frac{\alpha} {U(\delta)}]}.
		\end{align*}
       Using \eqref{eq:Q-Potentials}, namely $\Ebb{e^{-\theta T_{1}(\delta)}}=1-\theta U_{\theta}(\delta)$, we obtain that
		\begin{align*}
			\Ebb{e^{-\theta M(\alpha,\delta)}}=e^{[\alpha (U(\delta))^{-1}]\ln{(1-\theta U_{\theta}(\delta))}}.
		\end{align*}
		We observe that since $\ln(1-x)+x\sim \frac{x^2}{2},$ as $x\to 0$,
		\[e^{[\alpha (U(\delta))^{-1}]\lbrb{\ln{(1-\theta U_{\theta}(\delta))}+\theta U_{\theta}(\delta)}}=e^{[\alpha (U(\delta))^{-1}]\frac{1}{2}\theta^2 U^2_{\theta}(\delta)\lbrb{1+o(1)}}.\]
		However, from \eqref{eq:Q-Potentials}, using $U_\theta(\delta)\leq U(\delta)$, we get that
		\begin{align*}
		&\lbbrbb{\frac{\alpha}{ U(\delta)}}U^2_{\theta}(\delta)\leq \alpha\frac{U^2_{\theta}(\delta)}{U(\delta)}+U^2_{\theta}(\delta)\leq \lbrb{1+\alpha}U(\delta)=o(1)
		\end{align*}
		since $U(0+)=0$ whenever $X$ is not a compound Poisson process. Therefore, we obtain that
		\[\Ebb{e^{-\theta M(\alpha,\delta)}}\sim e^{-[\alpha (U(\delta))^{-1}]\theta U_{\theta}(\delta))}.\]
		Next, we observe that since $U_\theta(\delta)\leq U(\delta)$
				\begin{equation}\label{eq:Utheta}
				\alpha\frac{U_{\theta}(\delta)}{U(\delta)}\leq \lbbrbb{\frac{\alpha}{ U(\delta)}}U_{\theta}(\delta)\leq \alpha.
				\end{equation}
		From \eqref{eq:Q-Potentials} we have that for some independent of $X$ exponential random variable $e_\theta$ with parameter\,$\theta>0$
		\begin{align*}
		&U(\delta)=\Ebb{\int_{0}^{\infty}1_{\{X_t\leq \delta\}}dt}=\Ebb{\int_{0}^{e_\theta}1_{\{X_t\leq \delta\}}dt}+\Ebb{\int_{e_\theta}^{\infty}1_{\{X_t\leq \delta\}}dt}=\\
		&\int_{0}^{\infty}\theta e^{-\theta v}\int_{0}^{v}\Pbb{X_t\leq \delta}dtdv+\Ebb{\int_{e_\theta}^{\infty}1_{\{X_t\leq \delta\}}dt1_{\{T_1(\delta)>e_\theta\}}}\leq \\
		&U_\theta(\delta)+U(\delta)\Pbb{T_1(\delta)>e_\theta}=U_\theta(\delta)+o(1)U(\delta).
		\end{align*}	
		Hence, from \eqref{eq:Utheta} we get that $\lim\tto{\delta}[\alpha (U(\delta))^{-1}] U_{\theta}(\delta)=\alpha$ and thus
		\begin{align*}
			\lim_{\delta\to 0} \Ebb{e^{-\theta M(\alpha,\delta)}}=e^{-\alpha \theta}.
		\end{align*}

		Hence, $M(\alpha,\delta)$ converges to $\alpha$ in probability. Clearly, for every $\gamma>0$, 
		\begin{align*}
			\Pbb{U(\delta)N(t,\delta)>\gamma}=\Pbb{\sum_{i=1}^{[\frac{\gamma} {U(\delta)}]}\eta_{i}\leq t}=\Pbb{M(\gamma,\delta)\leq t}.
		\end{align*}
		The convergence of $M(\gamma,\delta)$ then implies that
		\[\lim_{\delta\to 0}\Pbb{U(\delta)N(t,\delta)>\gamma}=1 \text{ if $\gamma<t$},\]
		\[\lim_{\delta\to 0}\Pbb{U(\delta)N(t,\delta)>\gamma}=0 \text{ if $\gamma>t$}.\]
	\end{proof}

Next, we deduce a representation of $N(1, \delta)$.
	 \begin{lemma}\label{L3}
		 With $N(1,\delta)$ specified as above, we have for all $j>0$,
		 \begin{equation}\label{eq3}
			 N\lbrb{1,\delta}\stackrel{d}{=} \sum_{i=1}^{j}N_{i}\lbrb{\frac{1}{j},\delta}+A_{j},
		 \end{equation}
		 where $N_{i}\lbrb{\frac{1}{j},\delta}$ are i.i.d.\,copies of $N\lbrb{\frac{1}{j},\delta}$ and $-j<A_{j}\leq 0$ is an integer valued random variable.
	 \end{lemma}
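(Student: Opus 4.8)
The plan is to decompose the range of $X$ over $[0,1]$ at the deterministic times $k/j$, $k=0,1,\dots,j$, and to recover the covering times $T_n$ for the whole interval from those associated with each subinterval of length $1/j$. The key observation is that a minimal covering of $\lbrb{X_s}_{s\leq 1}$ built from the $\delta$-ladder times $T_n(\delta)$ is \emph{almost} the concatenation of the minimal coverings of the $j$ increments $\lbrb{X_{(k-1)/j+s}-X_{(k-1)/j}}_{s\leq 1/j}$, the only discrepancy coming from the finitely many boundary blocks that straddle two consecutive division points $k/j$.

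First I would record the strong Markov property at the times $k/j$: for each $k$, the process $\lbrb{X_{(k-1)/j+s}-X_{(k-1)/j}}_{s\leq 1/j}$ is independent of the past and distributed as $\lbrb{X_s}_{s\leq 1/j}$. Hence the random variable $N_k\lbrb{\tfrac1j,\delta}$ counting the minimal number of $\delta$-intervals needed to cover the range of the $k$-th increment is an i.i.d.\,copy of $N\lbrb{\tfrac1j,\delta}$, and the $N_k$ are mutually independent. Concatenating these $j$ coverings yields a covering of $\lbrb{X_s}_{s\leq 1}$ using $\sum_{k=1}^{j}N_k\lbrb{\tfrac1j,\delta}$ intervals, so $N\lbrb{1,\delta}\leq\sum_{k=1}^{j}N_k\lbrb{\tfrac1j,\delta}$.

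The heart of the matter, and the step I expect to be the main obstacle, is the reverse bookkeeping: the globally minimal covering of $[0,1]$ can be cheaper than the concatenated one because a single $\delta$-block of the global covering may bridge a division point $k/j$, thereby saving intervals relative to forcing a fresh block to start at each $k/j$. I would argue that each division point $k/j$ can be interior to at most one global $\delta$-block, so the discrepancy between $N\lbrb{1,\delta}$ and $\sum_{k=1}^{j}N_k\lbrb{\tfrac1j,\delta}$ is controlled by the $j-1$ interior division points. More precisely, splitting the concatenated covering at the $j-1$ internal points costs at most one extra interval per point, so
\begin{equation*}
\sum_{k=1}^{j}N_k\lbrb{\tfrac1j,\delta}-\lbrb{j-1}\leq N\lbrb{1,\delta}\leq \sum_{k=1}^{j}N_k\lbrb{\tfrac1j,\delta}.
\end{equation*}

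Setting $A_j:=N\lbrb{1,\delta}-\sum_{k=1}^{j}N_k\lbrb{\tfrac1j,\delta}$, the two-sided bound gives $-\lbrb{j-1}\leq A_j\leq 0$, and since $N\lbrb{1,\delta}$ and each $N_k$ are integer valued, so is $A_j$; thus $-j<A_j\leq 0$ as claimed. The distributional identity \eqref{eq3} then follows because the $N_k\lbrb{\tfrac1j,\delta}$ constructed above are genuinely i.i.d.\,copies of $N\lbrb{\tfrac1j,\delta}$. The one delicate point to verify is that the ladder-time construction defining the minimal covering really does restart cleanly at each $k/j$ up to the single boundary block, which is exactly where one uses that between consecutive division points the increments are governed by an independent copy of $X$; I would make this precise by comparing the ladder times $T_n(\delta)$ of the global process with the ladder times of each increment and checking that they agree except across the $j-1$ bridging blocks.
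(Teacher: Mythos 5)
Your proposal is correct and follows essentially the same route as the paper: a decomposition at the deterministic times $k/j$, independence of the increments via the Markov property, and the observation that the concatenated covering can exceed the globally minimal one by at most one interval per interior division point, giving $-j<A_j\leq 0$. The only cosmetic difference is that the paper phrases this as an induction on a single split $N(1,\delta)\stackrel{d}{=}N(t,\delta)+N(1-t,\delta)+A$ with $A\in\{-1,0\}$, whereas you handle all $j-1$ division points in one step.
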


	 \begin{proof}
	 	The proof can be done easily by induction once one observes that, for any $0<t<1$, $N(1,\delta)\stackrel{d}{=}N(t,\delta)+N(1-t,\delta)+A$, where $A$ is either $0$ or $-1$ and $N(1-t,\delta)$ and $N(t,\delta)$ are independent. We argue pathwise. It is not difficult to see the following: Up to time $t$ we have 		some number of intervals that cover the range of the subordinator. At time $t$ we start a new covering of the remaining time $1-t$ and continue the old covering for the whole length $1$. It is easy to see that the new covering of the range on $(t,1)$ will exceed the old covering by at most $1$ 			since in the worst case scenario we have started our new covering at $X_{t}$ when $X_{t}$ was in an interval of the original covering of the whole $(0,1)$. 
	 \end{proof}
 \newpage
	 The next lemma is in fact \cite[Lem. 3.1]{SL} which we reproduce for clarity.
	\begin{lemma}\label{L4}
		There is an absolute constant $C_{a}>0$ such that for each $\delta>0$ and $x>0$
		\begin{equation}\label{eq4}
			\Pbb{N(t,\delta)\geq x}\leq e^{\frac{2C_{a}t}{U(\delta)}-\frac{x}{8}},
		\end{equation}
		where $C_a$ does not depend on $X$.
	 \end{lemma}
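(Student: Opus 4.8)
The plan is to reduce the tail estimate for $N(t,\delta)$ to an exponential Chebyshev (Chernoff) bound for the i.i.d.\ sum $\sum_{i=1}^{k}\eta_i$ and then to control the Laplace transform $\Ebb{e^{-\theta\eta_1}}=1-\theta U_\theta(\delta)$ uniformly in $\delta$. First I would use the identity $\{N(t,\delta)\geq k\}=\{\sum_{i=1}^{k}\eta_i\leq t\}$ recorded before Lemma \ref{L1}: with $k=\lceil x\rceil\geq x$, for every $\theta>0$,
\[
\Pbb{N(t,\delta)\geq x}=\Pbb{\sum_{i=1}^{k}\eta_i\leq t}\leq e^{\theta t}\lbrb{\Ebb{e^{-\theta\eta_1}}}^{k}=e^{\theta t}\lbrb{1-\theta U_\theta(\delta)}^{k},
\]
where the middle step is Markov's inequality applied to $e^{-\theta\sum\eta_i}$ and the last equality is \eqref{eq:Q-Potentials}. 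Since $0\leq 1-\theta U_\theta(\delta)<1$ and $k\geq x$, this is at most $e^{\theta t-x\,\theta U_\theta(\delta)}$. It therefore suffices to produce a single choice $\theta=\theta(\delta)$ of order $1/U(\delta)$ for which $\theta U_\theta(\delta)$ is bounded below by an absolute constant.

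That uniform lower bound is the crux. Writing $g(t):=\Pbb{X_t\leq\delta}$, I would first note that $g$ is submultiplicative: decomposing $X_{t+s}=X_t+\widetilde X_s$ with $\widetilde X_s$ an independent copy of $X_s$ and using that both summands are nonnegative gives $\{X_{t+s}\leq\delta\}\subseteq\{X_t\leq\delta\}\cap\{\widetilde X_s\leq\delta\}$, whence $g(t+s)\leq g(t)g(s)$. By \eqref{eq:Q-Potentials} we have $U(\delta)=\int_0^\infty g(t)\,dt$ and $U_\theta(\delta)=\int_0^\infty e^{-\theta t}g(t)\,dt$, so using $1-e^{-\theta t}\leq\theta t$,
\[
U(\delta)-U_\theta(\delta)=\int_0^\infty\lbrb{1-e^{-\theta t}}g(t)\,dt\leq\theta\int_0^\infty t\,g(t)\,dt.
\]
Submultiplicativity then yields $\int_0^\infty t\,g(t)\,dt\leq U(\delta)^2$; indeed
\[
U(\delta)^2=\int_0^\infty\!\!\int_0^\infty g(s)g(t)\,ds\,dt\geq\int_0^\infty\!\!\int_0^\infty g(s+t)\,ds\,dt=\int_0^\infty u\,g(u)\,du,
\]
the last equality following from the change of variables $u=s+t$ (for fixed $u$ the inner variable ranges over an interval of length $u$). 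Combining the two displays gives the key inequality $U_\theta(\delta)\geq U(\delta)\lbrb{1-\theta U(\delta)}$.

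Finally I would take $\theta=\tfrac{1}{2U(\delta)}$, which is well defined since $0<U(\delta)<\infty$ for $\delta>0$. Then $\theta U(\delta)=\tfrac12$, so the key inequality gives $\theta U_\theta(\delta)\geq\tfrac12\cdot\tfrac12=\tfrac14\geq\tfrac18$, while $\theta t=\tfrac{t}{2U(\delta)}$. Substituting into the Chernoff bound yields
\[
\Pbb{N(t,\delta)\geq x}\leq e^{\frac{t}{2U(\delta)}-\frac{x}{4}}\leq e^{\frac{2C_a t}{U(\delta)}-\frac{x}{8}}
\]
with $C_a=\tfrac14$ (the first exponent equals $\tfrac{2C_a t}{U(\delta)}-\tfrac{x}{4}$ and $-\tfrac{x}{4}\leq-\tfrac{x}{8}$), which is the claim, and $C_a$ does not depend on $X$. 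The delicate point is precisely the uniform lower bound on $\theta U_\theta(\delta)$: it is the submultiplicativity of $g$ (equivalently, that the passage time $T_1(\delta)$ is \emph{new-better-than-used}, $\Pbb{T_1(\delta)>t+s}\leq\Pbb{T_1(\delta)>t}\Pbb{T_1(\delta)>s}$) that prevents $T_1(\delta)$ from concentrating too much mass near the origin and forces $\int_0^\infty t\,g(t)\,dt\leq U(\delta)^2$; without this one could not convert the first-moment quantity $U(\delta)$ into control of the Laplace transform at the scale $\theta\asymp 1/U(\delta)$ that the bound \eqref{eq4} requires.
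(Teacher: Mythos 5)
Your proof is correct, but it takes a genuinely different route from the paper's. Both arguments open the same way, applying Markov's inequality to $e^{-\theta\sum_{i\leq k}\eta_i}$ with $k\geq x$, but they diverge at the crux, namely bounding $\Ebb{e^{-\theta T_1(\delta)}}$ away from $1$ for $\theta$ of order $1/U(\delta)$. The paper imports this: it quotes the estimate of \cite{SL} at the specific value $\lambda=2\Phi\lbrb{\frac{1}{\delta}}$, giving $\Ebb{e^{-\lambda T_1(\delta)}}\leq e^{-1/8}$, and then converts $\Phi\lbrb{\frac{1}{\delta}}$ into $C_a/U(\delta)$ via \cite[Ch. 3, Prop. 1]{B96}; indeed the paper presents the lemma as a reproduction of \cite[Lem. 3.1]{SL}. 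You instead derive the needed bound from first principles: submultiplicativity of $g(t)=\Pbb{X_t\leq \delta}$ (equivalently, the new-better-than-used property of $T_1(\delta)$) gives
\begin{equation*}
\int_0^\infty t\,g(t)\,dt\leq U^2(\delta),\qquad\text{hence}\qquad U_\theta(\delta)\geq U(\delta)\lbrb{1-\theta U(\delta)},
\end{equation*}
and the choice $\theta=\frac{1}{2U(\delta)}$ yields $\theta U_\theta(\delta)\geq \frac14$ together with the Chernoff bound. What your route buys is a fully self-contained proof that never mentions $\Phi$ at all --- you work directly at the natural scale $1/U(\delta)$ --- and it produces an explicit constant $C_a=\frac14$ and in fact the slightly stronger exponent $-x/4$ in place of $-x/8$. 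What the paper's route buys is brevity, given that both inputs exist in the literature, and it makes transparent that the lemma is Linde--Shi's small-deviation estimate in disguise. Two minor points: your argument uses $0<U(\delta)<\infty$, which indeed holds for every subordinator that is not identically zero (your own submultiplicativity shows $g$ decays geometrically, so $\Ebb{T_1(\delta)}<\infty$), hence is harmless under the paper's standing assumptions; and the identity $\Ebb{e^{-\theta\eta_1}}=1-\theta U_\theta(\delta)$ you invoke is exactly \eqref{eq:Q-Potentials}, as you note, so no gap there.
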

	 \begin{proof}
		By Markov's inequality we obtain for arbitrary $\lambda>0$ that
		 \begin{align*}
			\Pbb{N(t,\delta)\geq x}&=\Pbb{N(t,\delta)\geq [x]+1}\\
			 &=\Pbb{\sum_{i=1}^{[x]+1}\eta_{i}\leq t}\\
			 &\leq e^{\lambda t}\lbrb{\Ebb{e^{-\lambda T_{1}(\delta)})}}^{[x]+1}.
		 \end{align*}	
		 Using the estimates of \cite{SL} for $\lambda=2\Phi\lbrb{\frac{1}{\delta}}$ we obtain that
		 \begin{align*}
		 	\lbrb{\Ebb{e^{-\lambda T_{1}(\delta)}}}^{[x]+1}\leq e^{\frac{-[x]-1}{8}}
		\end{align*}
		 and therefore
		 \begin{align*}
			\Pbb{N(t,\delta)\geq x} &\leq e^{2\Phi\lbrb{\frac{1}{\delta}}t}e^{\frac{-[x]-1}{8}}.
		 \end{align*}
		 It remains to observe that according to \cite[Chap.3, Prop.1]{B96} there is an absolute constant $C_{a}$ not depending on the subordinator such that $\Phi\lbrb{\frac{1}{\delta}}\leq \frac{C_{a}}{U(\delta)}$. 
	 \end{proof}
 
	Our next aim is to estimate the variance of $N(t,\delta)$ via its second moment.
 	\begin{lemma}\label{L5}
		 For any subordinator $X$ we have that 
		 \begin{equation}\label{eq5}
			 \mathrm{Var}(N(t,\delta))\leq \Ebb{N^{2}(t,\delta)}\leq M\frac{ t^{2}}{U^{2}(\delta)}+K
		 \end{equation}
		 where $M$ and $K$ are absolute constants not depending on $X$.
	\end{lemma}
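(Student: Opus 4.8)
The plan is to bound the second moment $\Ebb{N^2(t,\delta)}$ by integrating the tail estimate from Lemma \ref{L4}, since for a non-negative integer-valued random variable one has the identity $\Ebb{N^2(t,\delta)}=\sum_{k\geq 1}(2k-1)\Pbb{N(t,\delta)\geq k}$, or equivalently $\Ebb{N^2}=\int_0^\infty 2x\,\Pbb{N\geq x}\,dx$ up to an elementary correction. Substituting the bound $\Pbb{N(t,\delta)\geq x}\leq e^{\frac{2C_a t}{U(\delta)}}e^{-x/8}$ from \eqref{eq4}, the prefactor $e^{2C_a t/U(\delta)}$ factors out of the integral, which is dangerous because it blows up as $\delta\to 0$; so a naive direct substitution will \emph{not} give the desired $t^2/U^2(\delta)$ scaling and must be handled more carefully.

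The key observation that makes the bound work is that the exponential tail in Lemma \ref{L4} is genuinely useful only once $x$ exceeds the threshold $x_0:=16 C_a t/U(\delta)$, at which point the exponent $\frac{2C_a t}{U(\delta)}-\frac{x}{8}$ becomes negative and the tail decays. First I would split the integral (or sum) at $x_0$. On the range $x\leq x_0$ I use the trivial bound $\Pbb{N(t,\delta)\geq x}\leq 1$, which contributes at most $\int_0^{x_0}2x\,dx=x_0^2=256\,C_a^2\,t^2/U^2(\delta)$, giving exactly the advertised $M\,t^2/U^2(\delta)$ term with $M=256\,C_a^2$. On the range $x>x_0$ the exponent is dominated: writing $\frac{2C_a t}{U(\delta)}-\frac x8\leq \frac{x_0}{8}-\frac x8=-\frac{x-x_0}{8}$, the contribution is at most $\int_{x_0}^\infty 2x\,e^{-(x-x_0)/8}\,dx$, which after the substitution $u=x-x_0$ equals $\int_0^\infty 2(u+x_0)e^{-u/8}\,du = 2\big(64 + 8x_0\big)$, a quantity comparable to $x_0$ and hence to $t/U(\delta)$, not $t^2/U^2(\delta)$. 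I would then absorb this lower-order contribution into the constant term: since $t/U(\delta)\leq 1+t^2/U^2(\delta)$, the linear-in-$x_0$ piece is bounded by a constant plus a multiple of $t^2/U^2(\delta)$, so both pieces combine into $M\,t^2/U^2(\delta)+K$ for absolute constants $M,K$. The inequality $\mathrm{Var}(N)\leq \Ebb{N^2}$ is immediate since variance is bounded by the second moment for non-negative quantities.

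The main obstacle is the uniformity of the constants in $\delta$ and in the subordinator $X$: everything hinges on the fact that the constant $C_a$ in Lemma \ref{L4} is absolute, and that the threshold $x_0$ is exactly tuned so that the ``bad'' prefactor $e^{2C_a t/U(\delta)}$ is cancelled by the trivial truncation rather than multiplied into the tail integral. The technical care needed is to keep the split point proportional to $t/U(\delta)$ so that the crossover cleanly separates the quadratic main term from the lower-order remainder; choosing $x_0$ a little larger than $16C_a t/U(\delta)$ gives room so that the tail integral is genuinely dominated by a geometric decay with an absolute rate. I do not expect any difficulty beyond this bookkeeping, since no properties of $X$ beyond the universal tail bound \eqref{eq4} and the relation between $U(\delta)$ and $\Phi(1/\delta)$ are required.
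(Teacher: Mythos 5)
Your proof is correct and takes essentially the same route as the paper: truncate at a threshold proportional to $t/U(\delta)$, use the trivial bound $\Pbb{N(t,\delta)\geq x}\leq 1$ below it (producing the $t^{2}/U^{2}(\delta)$ term), and integrate the exponential tail from Lemma \ref{L4} above it, where the prefactor $e^{2C_a t/U(\delta)}$ is cancelled by the choice of threshold. The paper implements the identical idea via the event decomposition $\Ebb{N^{2};N^{2}\leq b^{2}}+\Ebb{N^{2};N^{2}>b^{2}}$ with $b=32C_a t/U(\delta)$ instead of your layer-cake integral, which is a purely cosmetic difference.
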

	\begin{proof}
		We divide 
		\begin{align*}
			\Ebb{N^{2}(t,\delta)}=\Ebb{N^{2}(t,\delta);N^{2}(t,\delta)]\leq b^{2}}+\Ebb{N^{2}(t,\delta);N^{2}(t,\delta)]> b^{2}}
		\end{align*}
		and we estimate both summands separately. For the rest of the proof we set $b=32C_{a}\frac{t}{U(\delta)}$ so that the first summand can be bounded by
		\begin{align*}
			\Ebb{N^{2}(t,\delta);N^{2}(t,\delta)\leq b^{2}}\leq\frac{1024C^{2}_{a}t^{2}}{U^{2}(\delta)}.
		\end{align*}
		For the second summand we obtain from integration by parts
		\begin{align*}
			\Ebb{N^{2}(t,\delta);N^{2}(t,\delta)> b^{2}}=b^{2}\Pbb{N^{2}(t,\delta)\geq b^{2}}+\int_{b^{2}}^{\infty}\Pbb{N(t,\delta)\geq \sqrt{y}}dy.
		\end{align*}
		The first summand of the right hand side is now estimated from above using Lemma \ref{L4} and recalling that $b=32C_{a}\frac{t}{U(\delta)}$ by
 		\begin{align*}
			b^{2}\Pbb{N^{2}(t,\delta)\geq b^{2}}\leq b^{2}e^{\frac{2C_{a}t}{U(\delta)}-\frac{32C_{a}t}{8U(\delta)}}\leq b^{2}.
		\end{align*}
 		It remains to estimate
		\begin{align*}
			\int_{b^{2}}^{\infty}\Pbb{N(t,\delta)\geq \sqrt{y}}dy=b^{2}\int_{1}^{\infty}\Pbb{N(t,\delta)\geq b\sqrt{z}}dz,
		\end{align*}
		which we do by using Lemma \ref{L4} in the following manner
		 \begin{align*}
			 b^{2}\int_{1}^{\infty}\Pbb{N(t,\delta)\geq b\sqrt{z}}dz&\leq b^{2}\int_{1}^{\infty}e^{\frac{2C_{a}t}{U(\delta)}-\frac{b\sqrt{z}}{8}}dz\\
			 &= b^{2}\int_{1}^{\infty}e^{-\frac{2C_{a}t}{U(\delta)}(2\sqrt{z}-1)}dz\\
			 &=2b^{2}\int_{1}^{\infty}ye^{-\frac{2C_{a}t}{U(\delta)}(2y-1)}dy\\
			&= b^{2}\int_{1}^{\infty}\frac{u+1}{2}e^{-\frac{2C_{a}t}{U(\delta)}u}du\\
			&\leq \frac{1024C^{2}_{a}t^{2}}{U^{2}(\delta)}\int_{1}^{\infty}ue^{-\frac{2C_{a}t}{U(\delta)}u}du\\
			&\leq 256\int_{0}^{\infty}ve^{-v}dv=:K.
		 \end{align*}
	 \end{proof}
	 
 \begin{lemma}\label{L2}
		 Let $X$ be a subordinator with $\Pi\lbrb{0,\infty}=\infty$ or, if $\Pi\lbrb{0,\infty}<\infty$, then $d>0$. Then there is a sequence $\delta_{j}\downarrow 0$ such that
		 \begin{align*}
		 	U(\delta_j)=r^j
		 \end{align*}
		 for some $r\in (0,1)$ 	and
		  \begin{equation}\label{eq2}
			 \lim_{j\to\infty} U(\delta_{j})N(1,\delta_{j})=1\quad a.s.  
		 \end{equation} 
	 \end{lemma}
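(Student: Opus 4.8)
The plan is to upgrade the in-probability convergence of Lemma \ref{L1} to almost-sure convergence along the geometric subsequence $\delta_j$ determined by $U(\delta_j)=r^j$. First I would justify the existence of such a subsequence: since $X$ is not a compound Poisson process we have $U(0+)=0$, and $U$ is continuous and nondecreasing in $\delta$ with $U(\delta)>0$ for $\delta>0$, so $U$ takes every value $r^j\in(0,U(1)]$ for $j$ large; picking $\delta_j$ with $U(\delta_j)=r^j$ gives $\delta_j\downarrow 0$. (If $U$ has jumps one replaces equality by $U(\delta_j)\le r^j$ and controls the discrepancy, but monotone right-continuity of $U$ makes this routine.) The point of the geometric choice is that $\sum_j U(\delta_j)=\sum_j r^j<\infty$, which is exactly what is needed to make a Borel--Cantelli argument close.

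The core of the argument is a second-moment/Chebyshev estimate combined with Borel--Cantelli. For fixed $\varepsilon>0$ set $Y_j:=U(\delta_j)N(1,\delta_j)$; by Lemma \ref{L1} we already know $Y_j\to 1$ in probability, so it suffices to produce a summable bound on $\Pbb{\labsrabs{Y_j-\Ebb{Y_j}}>\varepsilon}$ together with $\Ebb{Y_j}\to 1$. By Chebyshev,
\begin{equation*}
\Pbb{\labsrabs{Y_j-\Ebb{Y_j}}>\varepsilon}\leq \frac{\mathrm{Var}(Y_j)}{\varepsilon^2}=\frac{U^2(\delta_j)\,\mathrm{Var}(N(1,\delta_j))}{\varepsilon^2}.
\end{equation*}
Now Lemma \ref{L5} with $t=1$ gives $\mathrm{Var}(N(1,\delta_j))\leq M U^{-2}(\delta_j)+K$, so that $U^2(\delta_j)\mathrm{Var}(N(1,\delta_j))\leq M+K U^2(\delta_j)=M+K r^{2j}$. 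This does \emph{not} decay, so the crude Chebyshev bound on $N(1,\delta_j)$ itself is not summable; the variance is only $O(1)$ relative to the square of the mean. This is the main obstacle, and the device that removes it is Lemma \ref{L3}.

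To exploit the additive decomposition I would apply Lemma \ref{L3} with a second layer of splitting: write $N(1,\delta_j)\stackrel{d}{=}\sum_{i=1}^{k}N_i(1/k,\delta_j)+A_k$ as a sum of $k$ i.i.d.\ copies plus a bounded error $-k<A_k\leq 0$. Then $Y_j$ is, up to the negligible term $U(\delta_j)A_k$, an average of $k$ independent contributions, and $\mathrm{Var}(Y_j)=k\,U^2(\delta_j)\mathrm{Var}(N(1/k,\delta_j))\leq k\,(Mk^{-2}+K r^{2j})$ by Lemma \ref{L5} applied with $t=1/k$ — wait, more carefully, Lemma \ref{L5} gives $\mathrm{Var}(N(1/k,\delta_j))\leq M k^{-2}U^{-2}(\delta_j)+K$, whence $U^2(\delta_j)\mathrm{Var}(N(1,\delta_j))\leq M/k + k K r^{2j}$. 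Choosing $k=k_j\to\infty$ slowly (e.g.\ $k_j=j$) makes the first term $o(1)$ and, since $r<1$, the second term $k_j K r^{2j}$ is summable in $j$. With the summable variance bound in hand, Borel--Cantelli yields $Y_j-\Ebb{Y_j}\to 0$ a.s.; it then remains to check $\Ebb{Y_j}\to 1$. For the latter I would combine the in-probability limit $Y_j\to 1$ from Lemma \ref{L1} with uniform integrability, which follows from the uniform second-moment bound $\Ebb{Y_j^2}=U^2(\delta_j)\Ebb{N^2(1,\delta_j)}\leq M+K U^2(\delta_j)\leq M+K$ supplied by Lemma \ref{L5}. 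This gives $\Ebb{Y_j}\to 1$ and completes the a.s.\ convergence $\lim_{j\to\infty}U(\delta_j)N(1,\delta_j)=1$ along the subsequence.
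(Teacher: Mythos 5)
Your plan is in substance the paper's own proof: decompose $N(1,\delta_j)$ via Lemma \ref{L3} into $k_j$ i.i.d.\ copies of $N(1/k_j,\delta_j)$ plus a bounded error term, bound the variance of each piece with Lemma \ref{L5}, apply Chebyshev and Borel--Cantelli along the geometric sequence $U(\delta_j)=r^j$, and finally get $\Ebb{U(\delta_j)N(1,\delta_j)}\to 1$ from Lemma \ref{L1} plus uniform integrability (your uniform second-moment argument is the same in substance as the paper's H\"older estimate). So the architecture is right.

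There is, however, one concrete step that fails as written: the choice $k_j=j$. Your own Chebyshev bound is
\begin{equation*}
\Pbb{\labsrabs{Y_j-\Ebb{Y_j}}>\varepsilon}\leq \frac{1}{\varepsilon^2}\lbrb{\frac{M}{k_j}+K k_j r^{2j}},
\end{equation*}
and with $k_j=j$ the first term is $M/j$, whose series diverges; so the probabilities are not summable and Borel--Cantelli gives nothing. Making the first term $o(1)$ is not the relevant criterion --- summability is, as you yourself state at the outset --- and you only verified summability for the second term. What is needed is $\sum_j 1/k_j<\infty$ together with $\sum_j k_j r^{2j}<\infty$, which forces $k_j$ to grow strictly faster than $j$; the paper takes $k_j=j^2$ (i.e.\ $j^2$ blocks of length $1/j^2$), giving the summable bound $\varepsilon^{-2}\lbrb{M/j^2+Kj^2r^{2j}}$. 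With that single change your argument closes. A second, minor point: $\mathrm{Var}(Y_j)$ is not exactly $k_jU^2(\delta_j)\mathrm{Var}\lbrb{N(1/k_j,\delta_j)}$, because the error term $A_{k_j}$ in Lemma \ref{L3} is not independent of the i.i.d.\ sum; the clean way (and the paper's way, which you essentially gesture at) is to apply Chebyshev only to the centred i.i.d.\ sum and to dispose of $U(\delta_j)\lbrb{A_{k_j}-\Ebb{A_{k_j}}}$ deterministically, since it is bounded in absolute value by $2k_jr^j\to 0$.
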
 
	 \begin{proof}
		 When either $\Pi\lbrb{0,\infty}=\infty$ or, if $\Pi\lbrb{0,\infty}<\infty$, then $d>0$, we have that $U(0+)=0$ since $\Pbb{X_t=0}=0,\forall t>0$. Since then $U(x)$ tends to $0$ as $x\to 0$ , we can always choose $\delta_{j}\downarrow 0$ such that $U(\delta_j)=r^j$. Assume without loss of generality that $t=1$. From \eqref{eq3} of Lemma \ref{L3} we get with  $N_i, 1\leq i\leq j^2$, independent copies of $N(\cdot,\cdot)$
		 \begin{align*}
			 Y_{i}(j^{2}):=U(\delta_{j})\lbrb{N_i\lbrb{\frac{1}{j^2},\delta_{j}}-\Ebb{N_i\lbrb{\frac{1}{j^2},\delta_{j}}}}
		\end{align*}
		that
		\begin{align*}
			U(\delta_{j})N(1,\delta_{j})-\Ebb{U(\delta_{j})N(1,\delta_{j})}=\sum_{i=1}^{j^{2}}Y_{i}(j^{2})+U(\delta_{j})\lbrb{A_{j^{2}}-\Ebb{A_{j^{2}}}}.
		\end{align*}
		From $|A_{j^{2}}|\leq j^{2}$ and the choice of $\delta_{j}$ we get that 
		\begin{align*}
			\lim_{j\to\infty}U(\delta_{j})(A_{j^{2}}-\Ebb{A_{j^{2}}})=\lim_{j\to\infty}r^{j}(A_{j^{2}}-\Ebb{A_{j^{2}}})=0\quad \text{a.s.}
		\end{align*}
		Then from Chebyshev's inequality, the i.i.d.\,property of $Y_{i}(j^{2})$ and \eqref{eq5} we get that for each $\epsilon>0$
		\begin{align*}
			 \Pbb{\labsrabs{\sum_{i=1}^{j^{2}}Y_{i}(j^{2})}>\epsilon}&\leq
			 \frac{\Ebb{\lbrb{\sum_{i=1}^{j^{2}}Y_{i}(j^{2})}^2}}{\epsilon^2}\\
			 &=\frac{U^{2}(\delta_{j})}{\epsilon^2}\Ebb{\sum_{i=1}^{j^{2}}\lbrb{N_i\lbrb{\frac{1}{j^2},\delta_{j}}-\Ebb{N_i\lbrb{\frac{1}{j^2},\delta_{j}}}}^2}  \\ &\leq\frac{1}{\epsilon^2}2j^{2}U^{2}(\delta_{j})\Ebb{N^{2}\lbrb{\frac{1}{j^2},\delta_{j}}}\leq \frac{1}{\epsilon^2}2Kj^{2}U^{2}(\delta_{j})+\frac{1}{\epsilon^2}\frac{M}{j^2}\\
			 &=\frac{1}{\epsilon^2}\lbrb{2Kr^{j}j^2+\frac{1}{j^2}}
		\end{align*}
		and the quantity at the right hand side is summable in $j$. Hence, the Borel-Cantelli lemma implies that
		\begin{align}\label{eq:toProve}
			\lim_{j\to\infty}\lbrb{ U(\delta_{j})N(1,\delta_{j})-\Ebb{U(\delta_{j})N(1,\delta_{j})}}=0\quad\text{a.s.}
		\end{align}

	Finally, we proceed to show that 
	\begin{align*}
		\lim_{\delta\to 0}U(\delta)\Ebb{N(1,\delta)}=1.
	\end{align*}
	For that purpose put $M(\delta)=U(\delta)N(1,\delta)$ and observe that from Lemma \ref{L1} and it follows that 
	\begin{align*}
		\Ebb{M(\delta), M(\delta)<1-\epsilon}\leq \lbrb{1-\epsilon}\Pbb{M(\delta)<1-\epsilon}\to 0,\,\,\text{ as $\delta\to 0$,}
	\end{align*}
	for any $\epsilon>0$. Moreover, with $A_{\delta,\epsilon}=1_{\{M(\delta)>1+\epsilon\}}$, Lemma \ref{L5} gives together with H\"{o}lder's inequality that 
	\begin{align*}
		\lbrb{\Ebb{M(\delta)1_{A_{\delta,\epsilon}}}}^{2}\leq \Pb^{2} \lbrb{A_{\delta,\epsilon}}\Ebb{M^{2}(\delta)}\leq \Pb^{2} \lbrb{A_{\delta,\epsilon}}(KU^2(\delta)+M).
	\end{align*}
	Therefore, for each $\epsilon>0$,
	\begin{align*}
		\,\,\,\,\,\,\,(1-\epsilon)\Pbb{M(\delta)\in(1-\epsilon,1+\epsilon)}
		&\leq \Ebb{M(\delta)}\\
		&\leq \Pb^{2} \lbrb{A_{\delta,\epsilon}}\lbrb{(KU^2(\delta)+M)}+(1+\epsilon)\Pbb{M(\delta)\leq 1+\epsilon}.
	\end{align*}
Therefore from Lemma \ref{L1}
\begin{align*}
&1-\epsilon\leq \liminf\tto{\delta}\Ebb{M(\delta)}\leq \limsup\tto{\delta}\Ebb{M(\delta)}\leq 1+\epsilon
\end{align*}
and sending $\epsilon\downarrow 0$ we get that $\lim\tto{\delta}\Ebb{M(\delta)}=1$ which then from \eqref{eq:toProve} yields the result.

	\end{proof}
	Now we prove Theorem 1.
	\begin{proof}[Proof of Theorem \ref{t1}]
		All we need to do is to extend Lemma \ref{L2} to arbitrary sequences. Suppose that 
		\[r^{i+1}=U\lbrb{\delta_{i+1}}\leq U\lbrb{\delta} \leq r^i=U\lbrb{\delta_i}.\] 
		Then by monotonicity of $U(\cdot)$ and $N(1,\cdot)$ we obtain that
		\begin{align*}
			rU(\delta_i)N(1,\delta_i)&=\frac{U(\delta_{i+1})}{U(\delta_i)}U(\delta_i)N(1,\delta_i)\\
			&\leq U(\delta)N(1,\delta)\\
			&\leq U(\delta_{i})N(1,\delta_{i+1})=\frac{1}{r}U(\delta_{i+1})N(1,\delta_{i+1}).
		\end{align*}
		As the left hand side converges to $r$ and the right hand side converges to $1/r$ as $\delta$ tends to zero, the proof is finished since $r$ is arbitrarily close to $1$.
	\end{proof}
	\section{Proofs for Applications}
	We discuss first Corollary \ref{cor:d>0}.
\begin{proof}[Proof of Corollary \ref{cor:d>0}]
    The proof follows from Theorem \ref{t1} and \cite[Prop.1]{DS11} by simple integration and putting $q=0$ since this result discusses the potential density $u(x)=\frac{dU(x)}{dx}$.
\end{proof} 
Next we consider Corollary \ref{cor:RV}
\begin{proof}[Proof of Corollary \ref{cor:RV}]
The result is immediate from Theorem \ref{t1} and the fact that
\[U(x)\stackrel{x\to 0}{\sim}\frac{1}{\Gamma(1+\alpha)\Phi\lbrb{\frac{1}{x}}}\stackrel{x\to 0}{\sim}\frac{x^\alpha}{\Gamma(1+\alpha) L\lbrb{\frac{1}{x}}},\]
see \cite[Ch III, p. 75]{B96}.
\end{proof}
\newpage
Next we prove Proposition \ref{prop:1}.
\begin{proof}[Proof of Proposition \ref{prop:1}] 
Let $\lbrb{\mathcal{A}_\delta}_{\delta<1}$ be a collection of coverings of $(X_s)_{s\leq 1}$ such that
\[\limsup\tto{\delta}\sum_{I_i\in \mathcal{A}_\delta}f(I_i)<\infty.\]

Then trivially, for any fixed $c\in(0,1)$, and $\delta>0$ small enough with $\mathcal{A}^c_\delta=\{I_i\in\mathcal{A}_\delta;|I_i|>c\delta\}$
\[\sum_{I_i\in \mathcal{A}_\delta}f(I_i)\geq |\mathcal{A}^c_\delta|f(c\delta)\geq D|\mathcal{A}^c_\delta|f(\delta),\]
where we have used that, for very small $\delta>0$, such that $\frac{3}{2}\ln\labsrabs{\ln \delta}\geq \ln\labsrabs{\ln c\delta}\geq \ln\labsrabs{\ln \delta}$, 
\[f(c\delta)=\frac{\ln\labsrabs{\ln c\delta}}{\Phi\lbrb{\frac{\ln\labsrabs{\ln c\delta}}{c\delta}}}\geq \frac{2c}{3}\frac{\ln\labsrabs{\ln \delta}}{\Phi\lbrb{\frac{\ln\labsrabs{\ln \delta}}{\delta}}}\geq Df(\delta) \]
since \cite[Chap 3., Prop. 1, (6)]{B96} together with the monotonicity of $\Phi$ give for a fixed $c\in\lbrb{0,1}$ that
\begin{align*}
&\Phi\lbrb{\frac{\ln\labsrabs{\ln c\delta}}{c\delta}}\leq\frac{3}{2c} \Phi\lbrb{\frac{2c}{3}\frac{\frac{3}{2}\ln\labsrabs{\ln \delta}}{c\delta}}=\frac{3}{2c}\Phi\lbrb{\frac{\ln\labsrabs{\ln \delta}}{\delta}}.
\end{align*}

 This lower bound together with \cite[Chap 3., Prop. 1]{B96}, the fact that \eqref{eq:notCauchy}  and Theorem \ref{t1} hold, gives an immediate contradiction upon assuming that 
\[0<\limsup\tto{\delta}\frac{|\mathcal{A}^c_\delta|}{N(1,\delta)}=\limsup\tto{\delta}U(\delta)|\mathcal{A}^c_\delta|\asymp\limsup\tto{\delta}\frac{|\mathcal{A}^c_\delta|}{\Phi\lbrb{\frac{1}{\delta}}}\]
since then we would have for some $K>0$
\begin{align*}
&\infty>\limsup\tto{\delta}\sum_{I_i\in \mathcal{A}_\delta}f(I_i)\geq K\limsup\tto{\delta}\Phi\lbrb{\frac{1}{\delta}}f(\delta)=K\limsup\ttinf{x}\frac{\Phi\lbrb{x}\ln\ln(x)}{\Phi\lbrb{x\ln\ln(x)}}=\infty.
\end{align*}
\end{proof}
%
%
%

\end{document}